\newtheorem{thm}{Theorem}[section]
\newtheorem*{thm*}{Theorem}
\newtheorem{prop}[thm]{Proposition}
\newtheorem{dfn}[thm]{Definition}
\theoremstyle{remark}
\newtheorem*{ex}{Example}
\newtheorem*{remark}{Remark}
\newcommand{\T}{\mathbb{T}}
\newcommand{\D}{\mathbb{D}}
\newcommand{\N}{\mathbb{N}}
\newcommand{\C}{\mathbb{C}}
\newcommand{\veps}{\varepsilon}
\newcommand{\lin}{\mathrm{lin}}
\newcommand{\aveN}{\frac{1}{N}\sum_{n=1}^N}
\newcommand{\limaveN}{\lim_{N\to\infty} \aveN}
\renewcommand{\hat}[1]{\widehat{#1}}
\newcommand{\la}{\langle}
\newcommand{\ra}{\rangle}
\title{Linear sequences and weighted ergodic theorems}
\author{Tanja Eisner}
\address
{Korteweg-de Vries Institute for Mathematics\\
University of Amsterdam\\
P.O.\ Box 94248\\
1090 GE Amsterdam\\
The Netherlands}
\email{t.eisner@uva.nl}
\subjclass[2010]{37A30, 47A05}
\keywords{Ergodic theorems, good weights, linear sequences}
\begin{document}

\maketitle

\begin{abstract}
We present a simple way to produce good weights for several types of ergodic theorem including the Wiener-Wintner type multiple return time theorem and the multiple polynomial ergodic theorem. These weights are deterministic and come from orbits of certain bounded linear operators on Banach spaces. This extends the known results for nilsequences and return time sequences of the form $(g(S^ny))$ for a measure preserving system $(Y,S)$ and $g\in L^\infty(Y)$, avoiding in the latter case the problem of finding the full measure set of appropriate points $y$. 
\end{abstract}

\section{Introduction}

The classical mean and pointwise ergodic theorems due to von Neumann and Birkhoff, respectively, take their origin in questions from statistical physics and found applications in quite different areas of mathematics such as number theory, stochastics and harmonic analysis. Over the years, they were extended and ge\-ne\-ralised in many ways. For example, to multiple ergodic theorems, see e.g.~Furstenberg \cite{furst-book}, Bergelson, Leibman, Lesigne \cite{BLL}, Host, Kra \cite{host-kra}, Ziegler \cite{ziegler}, Tao \cite{tao}, to the Wiener-Wintner theorem, see e.g.~Assani \cite{assani-book}, Lesigne \cite{lesigneWW}, Frantzikinakis \cite{frantWW}, Host, Kra \cite{HK-unif-norms}, Eisner, Zorin-Kranich \cite{EZ}, to the return time theorem and its generalisations, see e.g.~Bourgain, Furstenberg, Katznelson, Ornstein \cite{BFKO},  Demeter, Lacey, Tao, Thiele \cite{DLTT}, Rudolph \cite{rudolphRTT}, Assani, Presser \cite{assani/presserRTT, assani/presser}, Zorin-Kranich \cite{pavel-RTT-mult}, and to further weighted, modulated and subsequential ergodic theorems, see e.g.~Berend, Lin, Rosenblatt, Tempelman \cite{berend/lin/rosenblatt/tempelman}, Below, Losert \cite{below/losert}, Bourgain \cite{bourgain89, bourgain88}, Wierdl \cite{wierdl}. 

The return time theorem due to Bourgain, solving a quite long standing open problem, is a classical example of a weighted pointwise ergodic theorem. It states that for every 
measure preserving system $(Y,\mu,S)$ and $g\in L^\infty(Y,\mu)$, the sequence $(g(S^ny))$ is for $\mu$-almost every $y$ a good weight for the pointwise ergodic theorem. This means that for every other 
system $(Y_{1},\mu_{1},S_{1})$ and every $g_{1}\in L^\infty(Y_{1},\mu_{1})$, the weighted ergodic averages 
$$
  \aveN g(S^ny) g_{1}(S_{1}^ny_{1})
$$
converge almost everywhere in $y_{1}$. The proof due to Bourgain, Furstenberg, Katznelson, Ornstein \cite{BFKO}, see also Lesigne, Mauduit, Moss\'e \cite{LMM} and Zorin-Kranich \cite{pavel-RTT}, is descriptive and gives conditions on $y$ to produce a good weight.  
However, these conditions can be quite difficult to check in a concrete situation. Later, Rudolph \cite{rudolphRTT}, see also Assani, Presser \cite{assani/presserRTT} and Zorin-Kranich \cite{pavel-RTT-mult}, gave a generalisation of the return time theorem and showed that (in the above notation) the sequence $(g(S^ny))$ is for almost every $y$ a universally good weight for multiple ergodic averages, see Definition \ref{def:universal} below. However, the conditions on the point $y$ did not become easier to check. 

The most general class of systems for which the convergence in the multiple return time theorem is known to hold \emph{everywhere}, hence leading to good weights which are easy to construct, are nilsystems, i.e., systems of the form $Y=G/\Gamma$ for a nilpotent Lie group $G$, a discrete cocompact subgroup $\Gamma$, the Haar measure $\mu$ on $G/\Gamma$ and the rotation $S$ by some element of $G$. For such a system $(Y,\mu,S)$, $g\in C(Y)$ and $y\in Y$, the sequence $(g(S^ny))$ is called a basic nilsequence. A \emph{nilsequence} is a uniform limit of basic nilsequences of the same step, or, equivalently, a sequence of the form $(g(S^ny))$ for an inverse limit $Y$ of nilsystems of the same step, $y\in Y$, a rotation $S$ on $Y$ and $g\in C(Y)$, see Host, Maass \cite{host/maass}. Indeed, recently Zorin-Kranich \cite{pavel-RTT-mult} proved the Wiener--Wintner type return time theorem for nilsequences showing universal convergence of averages 
\begin{equation}\label{eq:weighted-mult-RTT}
\aveN a_n g_1(S_1^ny_1) \cdots g_k(S_k^ny_k) 
\end{equation}
for every $k\in \N$ and every nilsequence $(a_n)$, where the universal sets of convergence do not depend on $(a_n)$. This generalised an earlier result by Assani, Lesigne, Rudolph \cite{ALR} for sequences of the form $(\lambda^n)$, $\lambda\in \T$, and $k=2$. 

In this paper we search for good weights for ergodic theorems using a functional analytic perspective and produce deterministic good weights. We first introduce sequences of the form $(\la T^nx,x'\ra)$, which we call \emph{linear sequences} if $x$ is in a Banach space $X$, $x'\in X'$ and $T$ is a linear operator on $X$ with relatively weakly compact orbits, see Section \ref{section:structure} below. Using a structure result for linear sequences, we show that they are good weights for the multiple polynomial ergodic theorem (Section \ref{section:mult}) and for the Wiener--Wintner type multiple return time theorem discussed above (Section \ref{section:ww-return}). 
In the last section we present a counterexample showing that the assumption on the operators cannot be dropped even for positive isometries on Banach lattices and the mean ergodic theorem.

We finally remark that all results in this note hold if we replace linear sequences by a larger class of ``asymptotic nilsequences'', i.e., for sequences $(a_n)$ of the form $a_n=b_n+c_n$, where $(b_n)$ is a nilsequence and $(c_n)$ is a bounded sequence satisfying $\limaveN |c_n|=0$ (cf.~Theorem \ref{thm:structure}). 
Examples of asymptotic nilsequences (of step $\geq 2$ in general) are \emph{multiple polynomial correlation sequences} $(a_{n})$ of the form 
$$
  a_{n}=\int_{Y} S^{p_{1}(n)}g_{1} \cdots S^{p_{k}(n)}g_{k}\, d\mu 
$$ 
for an ergodic invertible measure preserving system $(Y,\mu,S)$, $k\in\N$, $g_{j}\in L^{\infty}(Y,\mu)$ and polynomials $p_{j}$ with integer coefficients, $j=1,\ldots, k$. This follows from Leibman \cite[Theorem 3.1]{leibman10} and, in the case of linear polynomials, is due to Bergelson, Host, Kra \cite[Theorem 1.9]{berg/host/kra}. Thus multiple polynomial correlation sequences provide another class of deterministic examples of good weights for the Wiener--Wintner type multiple return time theorem and the multiple polynomial ergodic theorem discussed in Sections \ref{section:ww-return} and \ref{section:mult}.

\textbf{Acknowledgement.} We thank Pavel Zorin-Kranich for helpful discussions. 

\section{Linear sequences and their structure}\label{section:structure}

A linear operator $T$ on a Banach space $X$ has \emph{relatively weakly compact orbits} if for every $x\in X$, the orbit $\{T^nx,\, n\in \N_0\}$ is relatively weakly compact in $X$. 

\begin{dfn}
We call a sequence $(a_n)\subset \C$ a \emph{linear sequence} if there exist a relatively weakly compact operator $T$ on a Banach space $X$ and $x\in X$, $x'\in X'$ such that $a_n=\la T^nx,x' \ra$ holds for every $n\in \N$.  
\end{dfn}
A large class of relatively weakly compact operators, leading to a large class of linear sequences, are power bounded operators on reflexive Banach spaces. Recall that an operator $T$ is called \emph{power bounded} if it satisfies $\sup_{n\in\N} \|T^n\|<\infty$. Another class of relatively weakly compact operators are power bounded positive operators on a Banach lattice $L^1(\mu)$ preserving the order interval generated by a strictly positive function, see e.g.~Schaefer \cite[Theorem II.5.10(f) and Proposition II.8.3]{schaefer-book}. See \cite[Section I.1]{eisner-book} and \cite[Section 16.1]{EFHN} for further discussion. 
\begin{remark}
By restricting to the closed linear invariant subspace $Y:=\overline{\lin}\{T^nx,\, n\in \N_0\}$ induced by the orbit and using the decomposition $X'=Y'\oplus Y'_0$ for $Y'_0:=\{x':\, x'|_Y=0\}$, it suffices to assume that only the relevant orbit $\{T^nx,\, n\in \N_0\}$ is relatively weakly compact in the definition of a linear sequence $(\la T^n x,x'\ra)$. Note that in this case $T$ is relatively weakly compact on $Y$ by a limiting argument, see  e.g.~\cite[Lemma I.1.6]{eisner-book}.
\end{remark} 

We obtain the following structure result for linear sequences as a direct consequence of an extended Jacobs--Glicksberg-deLeeuw decomposition for relatively weakly compact operators. 
\begin{thm}\label{thm:structure}
Every linear sequence is a sum of an almost periodic sequence and a (bounded) sequence $(c_n)$ satisfying $\limaveN |c_n|=0$.
\end{thm}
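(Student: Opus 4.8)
The plan is to derive the structure of linear sequences from the Jacobs--Glicksberg--deLeeuw (JGdL) decomposition applied to an operator with relatively weakly compact orbits. By the remark preceding the theorem, I may assume that the whole operator $T$ is relatively weakly compact on the Banach space $X$ (after restricting to $Y=\overline{\lin}\{T^nx,\, n\in\N_0\}$). The JGdL theory then provides a canonical splitting $X=X_{\mathrm{rev}}\oplus X_{\mathrm{st}}$ into a \emph{reversible} (almost periodic) part and a \emph{stable} (flight) part, both $T$-invariant and closed. On $X_{\mathrm{rev}}$, the restriction of $T$ extends to an invertible isometry in the weak operator topology closure of $\{T^n\}$, and $X_{\mathrm{rev}}$ is the closed span of the eigenvectors of $T$ corresponding to unimodular eigenvalues; on $X_{\mathrm{st}}$, every orbit tends weakly to $0$ along a density-one set of times, which is precisely the Ces\`aro-to-zero behaviour I want to capture.

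First I would fix the decomposition $x=x_{\mathrm{rev}}+x_{\mathrm{st}}$ accordingly and write $a_n=\la T^nx_{\mathrm{rev}},x'\ra+\la T^nx_{\mathrm{st}},x'\ra=:b_n+c_n$. For the reversible component I would argue that $(b_n)$ is almost periodic: on $X_{\mathrm{rev}}$ the operator $T$ acts as an invertible isometry with spectrum on the unit circle, and more concretely $X_{\mathrm{rev}}$ is the closed linear span of unimodular eigenvectors, so $x_{\mathrm{rev}}$ can be approximated in norm by finite linear combinations $\sum_j \alpha_j u_j$ with $Tu_j=\l_j u_j$, $|\l_j|=1$. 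For such a finite combination the sequence $n\mapsto \la T^n(\sum_j\alpha_j u_j),x'\ra=\sum_j\alpha_j\l_j^n\la u_j,x'\ra$ is a trigonometric polynomial, hence almost periodic, and uniform approximation (using $\|T^n\|$ bounded on $X_{\mathrm{rev}}$ together with the continuity of $x'$) shows $(b_n)$ is a uniform limit of trigonometric polynomials, i.e.\ almost periodic in the sense of Bohr. For the stable component I would use that weak convergence to $0$ along a density-one subsequence, combined with boundedness of the orbit and applying the functional $x'$, yields $\limaveN|\la T^nx_{\mathrm{st}},x'\ra|=0$; the standard way to see this is that relative weak compactness of the orbit lets one pass from $\la T^nx_{\mathrm{st}},x'\ra\to 0$ along a density-one set to Ces\`aro convergence of the absolute values, e.g.\ via the characterisation of JGdL stable vectors that $\limaveN|\la T^nx_{\mathrm{st}},x'\ra|=0$ for all $x'\in X'$.

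The main obstacle I anticipate is making the almost periodicity of $(b_n)$ fully rigorous in the generality of relatively weakly compact (not merely mean ergodic or contractive) operators, where one cannot assume a spectral theorem or unitarity. The clean route is to invoke the operator-theoretic form of the JGdL decomposition as it appears in Eisner \cite{eisner-book} and Eisner--Farkas--Haase--Nagel \cite{EFHN}: the restriction of $T$ to $X_{\mathrm{rev}}$ generates a relatively compact group in the weak (equivalently strong) operator topology whose closure is a compact abelian group, and one then represents $n\mapsto T^n|_{X_{\mathrm{rev}}}$ through the characters of this group. Pairing with $x_{\mathrm{rev}}$ and $x'$ converts this into an almost periodic scalar sequence directly, bypassing the need to exhibit eigenvectors explicitly. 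I would therefore carry out the span-of-eigenvectors argument as the concrete version and cite the abstract JGdL statement to cover the general relatively weakly compact case; the remaining verifications (boundedness of the orbit, continuity of the pairing, density-one Ces\`aro estimates) are routine once the decomposition is in hand.
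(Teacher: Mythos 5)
Your proposal is correct and follows essentially the same route as the paper: the Jacobs--Glicksberg--deLeeuw decomposition $X=X_r\oplus X_s$, with the reversible part handled by approximating $x_r$ by finite linear combinations of unimodular eigenvectors (so $(b_n)$ is a uniform limit of trigonometric polynomials, hence almost periodic) and the stable part handled by the characterisation $\limaveN|\la T^nx_s,x'\ra|=0$. The extra care you take about boundedness of the orbit and the abstract compact-group formulation is not needed but does no harm.
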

Recall that by the Koopman-von Neumann lemma, see e.g.~Petersen \cite[p.~65]{petersen-book}, for bounded sequences the condition $\limaveN |c_n|=0$ is equivalent to $\lim_{j\to\infty}c_{n_{j}}=0$ for some subsequence $\{n_{j}\}\subset\N$ with density $1$.
\begin{proof}
Let $T$ be a relatively weakly compact operator on a Banach space $X$.
By the Jacobs-Glicksberg-deLeeuw decomposition, see e.g.~\cite[Theorem II.4.8]{eisner-book}, $X=X_r\oplus X_s$, where 
$$
  X_r=\overline{\lin}\{x:\, Tx=\lambda x \text{ for some }  \lambda \in \T\},
$$ 
while every $x\in X_s$ satisfies $\lim_{N\to\infty}\aveN |\la T^nx,x'\ra|=0$ for every $x'\in X'$. 

Let $x\in X$, $x'\in X'$ and define the sequence $(a_n)$ by $a_n:=\la T^nx,x'\ra$. 
For $x\in X_s$ we have $\limaveN |a_n|=0$ by the above. 
If now $x$ is an eigenvector corresponding to an eigenvalue $\lambda\in \T$, then $a_n=\lambda^n\la x,x'\ra$. Therefore for every $x\in X_r$, the sequence $(a_n)$ is a uniform limit of finite linear combinations of sequences $(\lambda^n)$, $\lambda\in \T$, and is therefore almost periodic. The assertion follows. 
\end{proof}

\section{A Wiener-Wintner type result for the multiple return time theorem}\label{section:ww-return}

In this section we show that one can take linear sequences as weights in the multiple Wiener-Wiener type generalisation of the return time theorem due to Zorin-Kranich \cite{pavel-RTT-mult} and Assani, Lesigne, Rudolph \cite{ALR} discussed in the introduction. 

First we recall the definition of a property satisfied universally. 

\begin{dfn}\label{def:universal}
Let $k\in \N$ and $P$ be a pointwise property for $k$ 
measure preserving dynamical systems. We say that a property $P$ is satisfied \emph{universally almost everywhere} if for every system $(Y_1,\mu_1, S_1)$ and every $g_1\in L^\infty(Y_1,\mu_1)$ there is a set $Y'_1\subset Y_1$ of full measure such that for every $y_1\in Y'_1$ and every system $(Y_2, \mu_2, S_2)$ ... for every system $(Y_k,\mu_k,S_k)$ and $g_k\in L^\infty (Y_k,\mu_k)$ there is a set $Y'_k\subset Y_k$ of full measure such that for every $y_{k} \in Y'_{k}$ the property $P$ holds.   
\end{dfn}

We show the following linear version of the Wiener-Wintner type multiple return time theorem.

\begin{thm} 
For every $k\in\N$, the weighted averages (\ref{eq:weighted-mult-RTT}) converge universally almost everywhere for every linear sequence $(a_n)$, where the universal sets $Y'_j$, $j=1,\ldots,k$, of full measure are independent of $(a_n)$. 
\end{thm}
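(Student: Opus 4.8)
The plan is to reduce the claim for linear sequences to the already-established case of nilsequences (indeed, of the very simple nilsequences $(\lambda^n)$, $\lambda\in\T$) by exploiting the structure theorem. By Theorem~\ref{thm:structure}, any linear sequence $(a_n)$ decomposes as $a_n = b_n + c_n$, where $(b_n)$ is almost periodic and $(c_n)$ is bounded with $\limaveN |c_n| = 0$. Since the averages in (\ref{eq:weighted-mult-RTT}) are linear in the weight $(a_n)$, I would handle the two pieces separately and add.

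First I would dispose of the null part $(c_n)$. Because each $g_j\in L^\infty(Y_j,\mu_j)$, the product $g_1(S_1^ny_1)\cdots g_k(S_k^ny_k)$ is uniformly bounded by $\prod_j \|g_j\|_\infty$, independently of the points $y_j$ and of $n$. Hence
\begin{equation*}
\left| \aveN c_n\, g_1(S_1^ny_1)\cdots g_k(S_k^ny_k) \right| \le \Big(\mprod_{j} \|g_j\|_\infty\Big)\, \aveN |c_n| \longrightarrow 0
\end{equation*}
as $N\to\infty$, at \emph{every} point $(y_1,\ldots,y_k)$. So the contribution of $(c_n)$ converges (to $0$) with no exceptional sets at all, and in particular with universal sets of full measure independent of $(a_n)$.

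Next I would treat the almost periodic part $(b_n)$. Being a uniform limit of trigonometric polynomials in the $(\lambda^n)$, $\lambda\in\T$, the sequence $(b_n)$ is in particular a nilsequence (of step $1$). Here I invoke Zorin-Kranich's Wiener--Wintner type multiple return time theorem \cite{pavel-RTT-mult}, which gives universal almost everywhere convergence of (\ref{eq:weighted-mult-RTT}) for every nilsequence, with universal sets $Y_j'$ that do \emph{not} depend on the nilsequence. Thus there exist sets $Y_j'$ of full measure, chosen once and for all (depending only on the $g_j$ through the statement of that theorem, not on $(a_n)$), such that at every point of these sets the averages with weight $(b_n)$ converge. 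The crucial point is that the uniform structure of the almost periodic part places $(b_n)$ inside the class for which the universal sets are already known to be weight-independent.

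Finally I would combine: intersecting over the two parts is unnecessary since the sets for $(c_n)$ are everything, so the universal sets $Y_j'$ are exactly those furnished by \cite{pavel-RTT-mult}. At each point of these sets the average with weight $a_n = b_n + c_n$ is the sum of a convergent average (the $(b_n)$ part) and an average tending to $0$ (the $(c_n)$ part), hence converges. The main subtlety, rather than an obstacle, is simply verifying that an almost periodic sequence qualifies as a nilsequence in the precise sense used by \cite{pavel-RTT-mult} and that the weight-independence of the universal sets there is genuinely uniform over all nilsequences; granting the cited theorem, the argument is immediate.
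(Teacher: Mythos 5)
Your proposal is correct and follows essentially the same route as the paper: decompose via Theorem~\ref{thm:structure}, kill the null part with the trivial sup-norm estimate $\left| \aveN c_n g_1(S_1^ny_1)\cdots g_k(S_k^ny_k)\right| \leq \|g_1\|_\infty\cdots\|g_k\|_\infty \aveN |c_n|$, and treat the almost periodic part as a (step-one) nilsequence covered by Zorin-Kranich's theorem with weight-independent universal sets. No further comment is needed.
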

\begin{proof}
By Theorem \ref{thm:structure}, we can show the assertion for almost periodic sequences and for $(a_n)$ satisfying $\limaveN |a_n|=0$ separately. For sequences from the second class, the assertion follows from the estimate 
$$
\left| \aveN a_n g_1(S_1^ny_1) \cdots g_k(S_k^ny_k) \right| \leq  \|g_1\|_\infty\cdots \|g_k\|_\infty  \aveN |a_n| 
$$
with a clear choice of $Y'_1,\ldots,Y'_k$.

Universal convergence for almost periodic sequences is a consequence of Zorin-Kranich's result \cite[Theorem 1.3]{pavel-RTT-mult} which shows the assertion for the larger class of nilsequences. 
\end{proof}

\section{Weighted multiple polynomial ergodic theorem}\label{section:mult}
 
Using the Host-Kra Wiener-Wintner type result for nilsequences and extending their result for linear polynomials from \cite{HK-unif-norms}, Chu \cite{chu} showed the following (see also \cite{EZ} for a slightly different proof). 
Let $(Y,\mu,S)$ be a
system  and $g\in L^\infty(Y,\mu)$. Then for almost every $y\in Y$, the sequence $(g(S^ny))$ is a \emph{good weight for the multiple polynomial ergodic theorem}, i.e., 
for the sequence of weights $(a_n)$ given by $a_{n}:=g(S^ny)$ and  for every $k\in\N$, 
the weighted multiple polynomial averages 
\begin{equation}\label{eq:weighted-multiple-norm-ave}
\aveN a_{n} S_1^{p_1(n)}g_1 \cdots S_1^{p_k(n)}g_k
\end{equation}
converge in $L^2$ for every system $(Y_1,\mu_1,S_1)$ with invertible $S_1$, every $g_1,\ldots,g_k\in L^\infty(Y_1,\mu_1)$ and every polynomials $p_1,\ldots,p_k$ with integer coefficients.    
 
The following result is a consequence of Chu \cite[Theorem 1.3]{chu}, the fact that the product of two nilsequences is again a nilsequence, and equidistribution theory for nilsystems, see e.g.~Parry \cite{parry} and Leibman \cite{leibman}.
\begin{thm}\label{thm:nilseq-good-polynomial}
Every nilsequence is a good weight for the multiple polynomial ergodic theorem. 
\end{thm}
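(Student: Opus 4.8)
The plan is to reduce to basic nilsequences and then upgrade Chu's almost-everywhere statement to a genuinely everywhere statement by exploiting equidistribution on nilmanifolds. First I would reduce to a basic nilsequence $a_n=g(S^ny)$ with $g\in C(Y)$, $y\in Y$ and $(Y,S)$ a nilsystem. This reduction is legitimate because a nilsequence is by definition a uniform limit of basic nilsequences of a fixed step, and the averages (\ref{eq:weighted-multiple-norm-ave}) depend Lipschitz-continuously on the weight in the supremum norm, uniformly in $N$: if $(a_n^{(j)})$ tends to $(a_n)$ uniformly, then
\[
  \Bigl\| \aveN (a_n-a_n^{(j)})\, S_1^{p_1(n)}g_1\cdots S_1^{p_k(n)}g_k \Bigr\|_{L^2}
  \le \sup_{n}\,\bigl|a_n-a_n^{(j)}\bigr|\;\|g_1\|_\infty\cdots\|g_k\|_\infty ,
\]
so if each approximant is a good weight, the averages for $(a_n)$ are Cauchy in $L^2$ and hence converge.

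For a fixed basic nilsequence, Chu's Theorem \cite[Theorem 1.3]{chu} already yields convergence of (\ref{eq:weighted-multiple-norm-ave}) for almost every base point $y\in Y$; the entire difficulty is to remove the exceptional null set so that the specific, possibly non-generic, point $y$ is covered. To this end I would pass to the orbit closure $Z:=\ol{\{S^ny:\ n\in\N_0\}}$. By the equidistribution theory for nilsystems (Parry \cite{parry}, Leibman \cite{leibman}), $Z$ is a sub-nilmanifold, the restriction $(Z,S)$ is uniquely ergodic with Haar measure $m_Z$, and $(S^ny)$ equidistributes in $Z$. The point of this reduction is that in a uniquely ergodic system \emph{every} point is generic, with Birkhoff averages of continuous functions converging uniformly.

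The upgrade itself I would carry out through the Cauchy criterion in $L^2(Y_1)$. Writing $A_N(y):=\aveN g(S^ny)\,S_1^{p_1(n)}g_1\cdots S_1^{p_k(n)}g_k$, it suffices to show that the scalar two-parameter averages $\la A_N(y),A_M(y)\ra_{L^2(Y_1)}$ converge as $N,M\to\infty$, since then $\|A_N(y)-A_M(y)\|_{L^2}^2\to 0$. Expanding the inner product, each summand is $g(S^ny)\,\ol{g(S^my)}$ times the correlation $\la S_1^{p_1(n)}g_1\cdots,\ S_1^{p_1(m)}g_1\cdots\ra_{L^2(Y_1)}$. The first factor is a two-variable basic nilsequence on $(Z\times Z,\,S\times S)$ evaluated at $(y,y)$, while by Leibman \cite[Theorem 3.1]{leibman10} the correlation is, as a function of $(n,m)$ with exponents $p_i(n)$ depending on $n$ and $p_j(m)$ on $m$, a two-variable nilsequence up to a sequence of density-zero average. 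Since a product of nilsequences is again a nilsequence, the main term is a two-variable nilsequence on a product nilsystem evaluated along the single orbit of one base point in each slot. The orbit closure of that base point under the product rotation is once more a uniquely ergodic sub-nilmanifold, so the two-parameter average converges for \emph{every} $y$: the two index ranges are independent, and one iterates the uniform single-parameter convergence coming from unique ergodicity; the density-zero remainder contributes nothing.

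The main obstacle I expect is precisely this passage from almost every to every point. Concretely, one must verify that the two-variable correlation with split exponents genuinely carries the nilsequence-plus-null structure, that multiplying by the weight nilsequence and invoking ``product of nilsequences is a nilsequence'' really produces a two-variable nilsequence on the appropriate product nilsystem, and that the resulting two-parameter average along the diagonal base point converges by unique ergodicity of its orbit-closure sub-nilmanifold. Controlling the density-zero error term uniformly, and making the iterated equidistribution argument rigorous on the product nilsystem carrying $g(S^ny)\,\ol{g(S^my)}$, are the technical points where the argument has to be made careful.
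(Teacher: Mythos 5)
Your proposal reaches the right conclusion but takes a genuinely different, and considerably heavier, route than the paper intends. The paper's argument is essentially a two-line deduction: Chu's Theorem 1.3 is not (as you read it) the almost-everywhere statement but a \emph{deterministic criterion} --- any bounded sequence $(a_n)$ for which the averages $\aveN a_n f(S^n y)$ converge for every nilsystem $(Y,S)$, every $y\in Y$ and every $f\in C(Y)$ is a good weight for the multiple polynomial averages. For a nilsequence $(a_n)$ the product $a_n f(S^ny)$ is again a nilsequence, and Ces\`aro averages of nilsequences converge at \emph{every} point by equidistribution theory (orbit closures in nilsystems are uniquely ergodic sub-nilmanifolds). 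So there is no exceptional null set to remove in the first place; the ``upgrade from a.e.\ to everywhere'' that you identify as the main difficulty is exactly what Chu's criterion is designed to avoid. Your plan instead bypasses Chu entirely and re-derives the convergence from scratch via the Cauchy criterion: expand $\la A_N,A_M\ra$, identify the correlation factor as a nilsequence plus a null sequence via Leibman, multiply by the two-variable weight nilsequence, and average by unique ergodicity of the product orbit closure. This is a legitimate strategy (it is close in spirit to later work deriving weighted convergence from correlation-sequence structure theorems), and your opening reduction to basic nilsequences by uniform approximation is correct and is needed in either approach.

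Two genuine gaps remain in your version, both of which the paper's route sidesteps. First, you need a \emph{two-parameter} structure theorem for correlation sequences with split exponents $p_i(n)$, $p_j(m)$; Leibman's Theorem 3.1 as cited in the paper is the one-parameter statement, so you must either locate the multiparameter version or prove the extension --- this is not a formality. Second, Leibman's decomposition requires the target system $(Y_1,\mu_1,S_1)$ to be \emph{ergodic}, whereas a good weight must work for every system; you would need to add a reduction to ergodic components (the $L^2$-Cauchy property does pass through the ergodic decomposition by dominated convergence, but this step is missing). Neither issue arises in the paper's argument, where the only equidistribution input is the everywhere-convergence of Ces\`aro averages of a single nilsequence.
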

%

This remains true when replacing a nilsequence by a linear sequence.

\begin{thm}
Every linear sequence is a good weight for the multiple polynomial ergodic theorem. 
\end{thm}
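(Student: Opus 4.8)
The plan is to reduce the statement about linear sequences to the already-established Theorem \ref{thm:nilseq-good-polynomial} about nilsequences, using the structure result Theorem \ref{thm:structure}. By that theorem, every linear sequence $(a_n)$ decomposes as $a_n = b_n + c_n$, where $(b_n)$ is almost periodic and $(c_n)$ is a bounded sequence with $\limaveN |c_n| = 0$. Since the weighted averages in (\ref{eq:weighted-multiple-norm-ave}) depend linearly on the weight sequence, I would handle the two pieces separately and add the results.

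For the null part $(c_n)$, the contribution is controlled in $L^2$ by a crude triangle-inequality bound: the $L^2$-norm of $\aveN c_n S_1^{p_1(n)}g_1 \cdots S_1^{p_k(n)}g_k$ is at most $\|g_1\|_\infty \cdots \|g_k\|_\infty \cdot \aveN |c_n|$, which tends to $0$. Hence this part converges to $0$ in $L^2$ and contributes nothing to the limit. For the almost periodic part $(b_n)$, I would invoke the fact that an almost periodic sequence is in particular a nilsequence (a uniform limit of finite linear combinations of the sequences $(\lambda^n)$, $\lambda\in\T$, which are basic nilsequences of step $1$). Then Theorem \ref{thm:nilseq-good-polynomial} applies directly to give $L^2$-convergence of the averages weighted by $(b_n)$, for every system $(Y_1,\mu_1,S_1)$ with invertible $S_1$, all $g_1,\ldots,g_k\in L^\infty$, and all integer polynomials $p_1,\ldots,p_k$.

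Combining the two cases, the weighted averages (\ref{eq:weighted-multiple-norm-ave}) for the full linear sequence $(a_n)$ converge in $L^2$, which is exactly the assertion. The main conceptual point — and essentially the only thing to verify carefully — is that the almost periodic part genuinely falls within the scope of Theorem \ref{thm:nilseq-good-polynomial}; this is immediate once one recalls that almost periodic sequences are step-$1$ nilsequences, so there is no real obstacle. The remaining estimates are entirely routine, making this a short corollary-style argument rather than a substantial proof.
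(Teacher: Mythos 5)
Your proposal is correct and follows the paper's own argument exactly: decompose the linear sequence via Theorem \ref{thm:structure} into an almost periodic part (handled by Theorem \ref{thm:nilseq-good-polynomial}, since almost periodic sequences are step-$1$ nilsequences) and a null part killed by the trivial $\aveN |c_n|$ estimate. The only cosmetic difference is that the paper notes the null part's contribution vanishes in $L^\infty$ rather than just $L^2$, which is immaterial.
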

\begin{proof}
For an almost periodic sequence $(a_n)$, the averages (\ref{eq:weighted-multiple-norm-ave}) converge in $L^2$ by Theorem \ref{thm:nilseq-good-polynomial}. It is also clear that the averages (\ref{eq:weighted-multiple-norm-ave}) converge to $0$ in $L^\infty$ for every sequence $(a_n)$ satisfying $\limaveN |a_n|=0$. The assertion follows now from Theorem \ref{thm:structure}. 
\end{proof}

\section{A counterexample}\label{section:example}

The following example shows that if one does not assume relative weak compactness in the definition of linear sequences, each of the above results can fail dramatically even for positive isometries on Banach lattices. 

\begin{ex}
Let $X:=l^1$ and $T$ be the right shift operator, i.e., 
$$
  T(t_1,t_2,\ldots):=(0, t_1,t_2,\ldots).
$$ 

We first show that for every $\lambda\in \T$, $x=(t_{j})\in X$ and $x'=(s_{j})\in X'$ we have 
\begin{equation} \label{eq:ex-ces-lim-div}
 \lim_{N\to\infty}\left| \aveN \lambda^{n} \la T^nx,x' \ra  - \aveN \lambda^{n} s_n \sum_{j=1}^\infty \overline{\lambda}^{j} t_j \right| =0.
\end{equation}
Indeed, take $\veps >0$ and $J\in\N$ such that $\sum_{j=J+1}^\infty |t_j|<\veps$. Then for $N\in \N$ we have
\begin{eqnarray*}
&\ & \left| \aveN \lambda^{n}\la T^nx,x' \ra  -\aveN   \lambda^{n}  s_n \sum_{j=1}^\infty \overline{\lambda}^{j} t_j \right| \\
&\ &\quad   = \left| \aveN \lambda^{n} \sum_{j=1}^\infty t_j s_{n+j} - \aveN   \lambda^{n}  s_n \sum_{j=1}^\infty \overline{\lambda}^{j} t_j \right| \\
&\ &\quad  \leq \left| \aveN \lambda^{n} \sum_{j=1}^J t_j s_{n+j} - \aveN   \lambda^{n}  s_n \sum_{j=1}^J \overline{\lambda}^{j} t_j  \right| + 2\|x'\|_\infty \veps \\
&\ &\quad  = \left| \sum_{j=1}^J \overline{\lambda}^{j} t_j \frac{1}{N} \sum_{n=1+j}^{N+j} \lambda^{n} s_{n} - \sum_{j=1}^J \overline{\lambda}^{j} t_j  \aveN   \lambda^{n}  s_n  \right| + 2\|x'\|_\infty \veps \\
&\ &\quad  \leq  \frac{2J \|x\|_{1}\|x'\|_{\infty}}{N} + 2\|x'\|_\infty \veps.
\end{eqnarray*}
Choosing, i.e., $N>J\|x\|_{1}/\veps$ finishes the proof of (\ref{eq:ex-ces-lim-div}).

In particular, for $\lambda=1$ we see that the sequence $(\la T^{n}x,x'\ra)$ is Ces\`aro divergent for every $x=(t_{j})\in l^1$ with $\sum_{j=1}^\infty t_j\neq 0$ and for every $x'\in l^{\infty}$ which is Ces\`aro divergent. Note that the sets of such $x$ and $x'$ are open and dense in $l^{1}$ and $l^{\infty}$, respectively. (The assertion for $l^{1}$ is clear as well as the openess of the set of Ces\`aro divergent sequences in  $l^{\infty}$, and density follows from the fact that one can construct C\`esaro divergent sequences of arbitrarily small supremum norm.) Thus, for topologically very big sets of $x$ and $x'$ (with complements being nowhere dense), the sequence $(\la T^{n}x,x'\ra)$ is not a good weight for the mean ergodic theorem.

We further show that in fact for every $0\neq x\in l^{1}$ there is $\lambda\in \T$ so that for every $x'\in l^{\infty}$ from a dense open set, the sequence $(\lambda^{n} \la T^n x,x'\ra)$ is Ces\`aro divergent, implying that the sequence $(\la T^{n} x,x'\ra)$ is not a good weight for the mean ergodic theorem. 

Take $0\neq x=(t_{j})\in l^{1}$ and define the function $f$ on the unit disc $\D$ by $f(z):=\sum_{j=1}^{\infty}t_{j}z^{j}$. Then $f$ is a nonzero holomorphic function belonging to the Hardy space $H^{1}(\D)$. By Hardy space theory, see e.g.~Rosenblum, Rovnyak \cite[Theorem 4.25]{rosenblum/rovnyak-book}, there is a set $M\subset \T$ of positive Lebesgue measure such that for every $\lambda\in M$ we have
$$
 \lim_{r\to 1-} f(r\overline{\lambda}) = \sum_{j=1}^{\infty}\overline{\lambda}^{j} t_{j}\neq 0.
$$ 
For every such $\lambda$, by (\ref{eq:ex-ces-lim-div}) we see that the sequence $(\lambda^{n} \la T^{n} x,x'\ra)$ is Ces\`aro divergent for every $x'=(s_{j})\in l^{\infty}$ such that $ (\lambda^{j} s_j )$ is Ces\`aro divergent. The set of such $x'$ is open and dense in $l^{\infty}$ since it is the case for $\lambda=1$ and the multiplication operator $(s_{j})\mapsto (\lambda^{j}s_{j})$ is an invertible isometry. 
Thus for every $0\neq x\in l^1$ there is an open dense set of $x'\in l^\infty$ such that the sequence $(\la T^n x,x'\ra)$ fails to be a good weight for the mean ergodic theorem. 
\end{ex}

\end{document}